\font\smallit=cmti10
\font\smalltt=cmtt10
\renewcommand\section{\@startsection {section}{1}{\z@}
{-30pt \@plus -1ex \@minus -.2ex}
{2.3ex \@plus.2ex}
{\normalfont\normalsize\bfseries}}
\renewcommand\subsection{\@startsection{subsection}{2}{\z@}
{-3.25ex\@plus -1ex \@minus -.2ex}
{1.5ex \@plus .2ex}
{\normalfont\normalsize\bfseries}}
\renewcommand{\@seccntformat}[1]{\csname the#1\endcsname. }
\newtheorem{theorem}{Theorem}
\newcommand{\beq}{\begin{equation}}
\newcommand{\eeq}{\end{equation}}
\def\({\left(}
\def\){\right)}
\begin{document}

\begin{center}
\uppercase{\bf A geometric approach to integer factorization}
\vskip 20pt
{\bf Dmitry I. Khomovsky}\\
{\smallit Lomonosov Moscow State University}\\
{\tt khomovskij@physics.msu.ru}
\end{center}
\vskip 30pt

\centerline{\smallit Received: , Revised: , Accepted: , Published: } 
\vskip 30pt

\centerline{\bf Abstract}

\noindent
We give a geometric approach to integer factorization. This approach is based on  special approximations of segments of the curve that is represented by $y=n/x$, where $n$ is \mbox{the integer} whose factorization we need.

\pagestyle{myheadings}
\markright{\smalltt \hfill}
\thispagestyle{empty}
\baselineskip=12.875pt
\vskip 30pt

\section{Introduction}
Let $n=x y$ be an odd integer and $x, y$ be its nontrivial factors.
It is known that $n$  can be represented as the difference of two squares:
\beq\label{0}
n=\left(\frac{x+y}{2}\right)^2-\left(\frac{x-y}{2}\right)^2.
\eeq
This property is used in Fermat's factorization method, which is based on searching for the representation of an odd integer as $n=a^2-b^2$. To find such a representation we need to take values of $a\geq \lceil\sqrt{n}\,\rceil$ and determine whether $a^2-n$ is a perfect square. Fermat's method is the most efficient when there is a factor near $\sqrt{n}$. It is used in the so-called multiplier improvement that was applied by Sherman Lehman in \cite{0}. The main idea of this improvement consists in searching for a multiplier $r< n^{1/3}$ such that $r n$ has a factor near $\sqrt{r n}$. Lehman's algorithm has worst-case running time $O(n^{1/3})$.
Although the above  methods are rarely  used for practical purposes, the ideas underlying them are a part of more efficient methods for factoring integers. We only give some references that allow the reader to become familiar with existing methods (see \cite{1,2,3,4}).

In this paper, we propose one approach to factorization of integers which has a simple geometric interpretation.
This approach allows us to look at Fermat's factorization method and similar methods from a different angle.
\section{\bf{The main theorem}}
Let $\mathcal{C}$ be a plane curve defined by an equation $f(x,y)=0$ in Cartesian coordinates, where $f: \mathbb{R}^2\to \mathbb{R}$. The following theorem gives a method for finding solutions of the Diophantine equation $f(x,y)=0$, in other words integral points on $\mathcal{C}$.
\begin{theorem}\label{T1}
Let $f$, $g$ be functions from $\mathbb{R}^2$ to $\mathbb{R}$,
and $g$ such that $g(x,y)\in \mathbb{Z}$ if $x, y\in \mathbb{Z}$. If for any integer $k$ from $a\leq k\leq b$, where $a, b\in \mathbb{Z}$, the system of equations $f(x,y)=0$, $g(x,y)=k$  does not have an integer solution, then the Diophantine equation $f(x,y)=0$ does not have solutions on $A=\{(x,y):a\leq g(x,y)\leq b\}$.
\end{theorem}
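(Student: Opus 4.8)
The plan is to argue by contraposition (equivalently, by contradiction). The statement has the shape ``if a certain finite family of auxiliary systems has no integer solution, then the single Diophantine equation $f(x,y)=0$ has no integer solution inside the region $A$''. So I would negate the conclusion and show that this forces one of the hypothesized ``no solution'' facts to fail.

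First I would suppose, for contradiction, that $(x_0,y_0)$ is an integer point with $f(x_0,y_0)=0$ and $(x_0,y_0)\in A$, i.e.\ $a\le g(x_0,y_0)\le b$. The key observation is that, since $x_0,y_0\in\mathbb{Z}$, the value $k_0:=g(x_0,y_0)$ is itself an integer, by the standing assumption on $g$. Together with $a\le k_0\le b$, this places $k_0$ among the integers $k$ in the range $a\le k\le b$.

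Then I would simply note that the pair $(x_0,y_0)$ is an integer solution of the system $f(x,y)=0$, $g(x,y)=k_0$: the first equation holds by hypothesis on $(x_0,y_0)$, and the second holds by the very definition of $k_0$. This contradicts the assumption that for every integer $k$ with $a\le k\le b$ the corresponding system has no integer solution. Hence no such $(x_0,y_0)$ can exist, which is exactly the assertion that $f(x,y)=0$ has no (integer) solutions on $A$.

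There is essentially no analytic or combinatorial obstacle here; the only point requiring a little care is the bookkeeping that guarantees $k_0$ is a genuine integer lying in the prescribed range, which is precisely the reason the hypothesis ``$g(x,y)\in\mathbb{Z}$ whenever $x,y\in\mathbb{Z}$'' is imposed. The substance of the theorem is not its proof but its consequence: it reduces the search for integral points on $\mathcal{C}$ to finitely many level systems $f(x,y)=0$, $g(x,y)=k$, with the payoff coming from later choosing $g$ so that each such system is easy to analyze.
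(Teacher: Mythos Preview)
Your proof is correct and follows essentially the same route as the paper: assume an integer solution $(x_0,y_0)\in A$ of $f(x,y)=0$, observe that $k_0=g(x_0,y_0)$ is an integer with $a\le k_0\le b$, and derive a contradiction with the hypothesis on the systems. The only difference is that your write-up is more expansive; the mathematical content is identical.
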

\begin{proof}
Suppose that the equation $f(x,y)=0$ has an integer solution $(x_0, y_0)$  that belongs to the set $A$. Since $x_0, y_0$ are integers, $g(x_0, y_0)$ is also an integer, moreover, we have $a\leq g(x_0, y_0)\leq b$. But this contradicts  the theorem conditions.
\end{proof}
The geometric meaning of the above result is revealed in the following reasoning.
To find integral points on a smooth  segment of the curve $\mathcal{C}$ or to show that they do not exist, we locally approximate $\mathcal{C}$ by another curve with the equation $g(x,y)=0$, and  $g$ must be such that $g(x,y)\in \mathbb{Z}$ if $x, y\in \mathbb{Z}$. After this, we look for integer solutions of the system $f(x,y)=0$, $g(x,y)=k$ for $a\leq k\leq b$, where $a, b$ are chosen so that the segment considered belongs to $A$.
\section{\bf{Factoring integers}}
We consider the curve $\mathcal{C}$ represented by the explicit equation $y=n/x$ with $x>0$. This curve is related to the problem of factoring $n$. At $x=\sqrt{n}$ the tangent to $\mathcal{C}$ is represented by $y=-x+2\sqrt{n}$. If we take $y=-x+\lfloor2\sqrt{n}\rfloor$, then the corresponding line lies under the positive branch of $\mathcal{C}$, and the function $g(x,y)=x+y-\lfloor2\sqrt{n}\rfloor$ satisfies the conditions of Theorem $1$. Therefore, if we consider  the system $y=n/x$, $y=-x+\lfloor2\sqrt{n}\rfloor+k$ for $0\leq k\leq b$ and if $n$ has a divisor $p$ in the interval \beq\label{1}\frac{1}{2}\left(\lfloor2\sqrt{n}\rfloor+b+1-\sqrt{(\lfloor2\sqrt{n}\rfloor+b+1)^2-4n}\right)<p\leq\sqrt{n},\eeq
then we can find this divisor. To determine whether the above system  has integer solutions it is sufficient to check whether $(\lfloor2\sqrt{n}\rfloor+k)^2-4n$ is a perfect square. Since $n$ is odd,  we may only consider the system for $k$ such  that $\lfloor2\sqrt{n}\rfloor+k$ is even. Also, if some additional information on divisors of $n$ is known, it can be used to reduce the number of values of $k$ that need to be checked in the interval $0\leq k\leq b$. For example, divisors of the Fermat numbers $F_m=2^{2^m}+1$  ($m\geq 2$) are of the form $r\cdot 2^{m + 2} + 1$, which was established by Euler and Lucas. Then for $F_m=p q$ we can show that $p+q\equiv 2\bmod {2^{2(m+2)}}$. Thus, we need to check only values of $k$ such that $k\equiv 2-\lfloor2\surd{F_m}\rfloor\bmod 2^{2(m+2)}$.

Note that the above-described factorization method is exactly Fermat's method.
Indeed, let us consider the term $(x+y)/2$ on the right-hand side of the formula $(\ref{0})$. Its smallest possible value is $\sqrt{n}$, since $y=n/x$. Therefore, the choice of integer values of this term is equivalent to the choice of $\ell$ in $y=-x+2\lfloor\sqrt{n}\rfloor+2\ell$. The following illustration reveals the geometrical meaning of the method.
\begin{center}
 \includegraphics[scale=0.5]{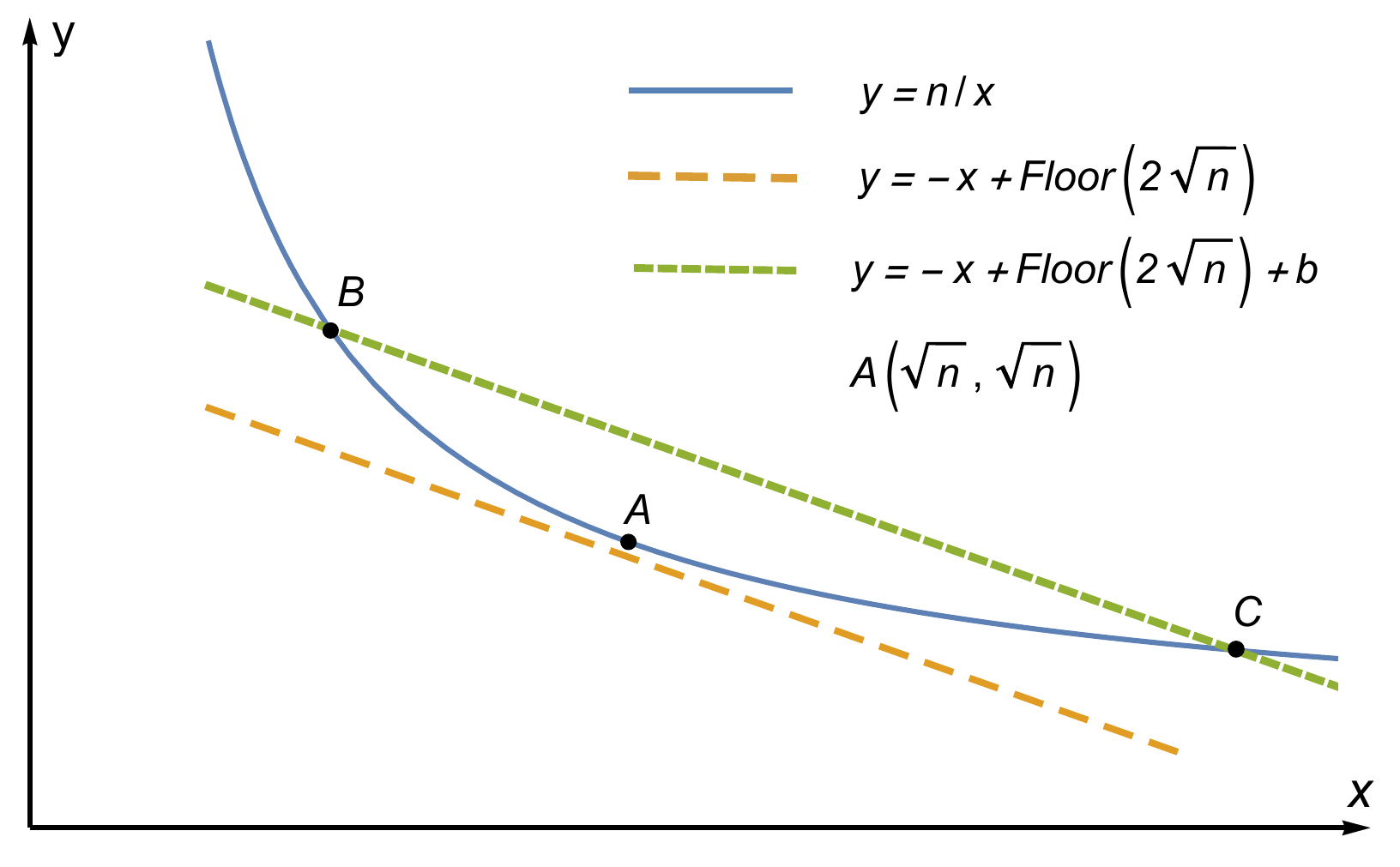}

 { Figure 1:  Graphic illustration of Fermat's method.}
\end{center}
{\bf Remark.} We can consider an approximation of segments of  $\mathcal{C}$ by osculating circles. At the point $(x_0,n/x_0)$, the osculating circle is given by
\beq\label{2}
\left(x-\frac{n^2+3x_0^4}{2x_0^3}\right)^2+\left(y-\frac{3n^2+x_0^4}{2 n x_0}\right)^2=\frac{(x_0^4+n^2)^3}{4n^2x_0^6}.
\eeq
At $x_0=\sqrt{n}$ we have $(x-2\sqrt{n})^2+(y-2\sqrt{n})^2=2n$. The solution of the system\footnote{For simplicity, we can consider the system $(x-2\sqrt{n})^2+(y-2\sqrt{n})^2=2n+k, y=n/x$.} $(x-[2\sqrt{n}\,])^2+(y-[2\sqrt{n}\,])^2-2n=\lfloor 2(\sqrt{n}-[2\sqrt{n}\,])^2-2n\rfloor+k, y=n/x$ shows that for large $n$ and small $b$  if we examine the system for all integers $k$ from $1\leq k\leq b$, then the interval in which we are looking for divisors has the length approximately equal to $2(b n)^{1/4}$. This is no better than $2(b^2 n)^{1/4}$ in Fermat's method.
\subsection{Using the Taylor series}
The first-degree Taylor polynomial of the function $n/x$ at $x=\sqrt{n/s}$ $(s\in\mathbb{Z^+})$  is $-sx+2\sqrt{sn}$.
Since the coefficients of the polynomial $g(x,y)=y+sx-\lfloor2\sqrt{sn}\rfloor$ are integers, then $g(x,y)$ can be used to search for divisors of  $n$ in a neighborhood of $\sqrt{n/s}$ (see Theorem $1$). It can be shown that for large $n$ we can check for divisors the interval with the length approximately equal to $2(b^2 n/s^3)^{1/4}$  by making $b$ steps. Here, by one step we mean checking the existence of integer solutions of $n/x+sx-\lfloor2\sqrt{sn}\rfloor=k$. We  see that for $s\geq n^{1/3}$, i.e., for $x\leq n^{1/3}$, the first-order approximation is inefficient.

Now we consider the second-degree Taylor polynomial of the function $n/x$ at $x=(n/s)^{1/3}$ $(s\in\mathbb{Z^+})$. It is equal to $s x^2 - 3 (s^2 n)^{1/3} x + 3 (s n^2)^{1/3}$. The polynomial $g(x,y)=y-s x^2 + \big[ 3 (s^2 n)^{1/3}\big] x - \big[ 3 (s n^2)^{1/3}\big]$ with integer coefficients
can be used to search for divisors of  $n$ in a neighborhood of $(n/s)^{1/3}$. We need to use the equation
\beq\label{3}n/x-s x^2 + \big[ 3 (s^2 n)^{1/3}\big] x - \big[ 3 (s n^2)^{1/3}\big]=\big\lfloor\big[ 3 (s^2 n)^{1/3}\big](n/s)^{1/3}-\big[ 3 (s n^2)^{1/3}\big]\big\rfloor+k.\eeq
If for integers $k=1, 2,\ldots, b$ we answer the question whether there exists an integer solution of $(\ref{3})$, then the  length of the  interval that we have checked for divisors is $(b^3 n/s^4)^{1/9}$. For $-b+1\leq k\leq 0$ the  length of the  interval is also $(b^3 n/s^4)^{1/9}$, but this interval is to the right of the point $(n/s)^{1/3}$ in contrast to the previous case. The following figure is an illustration of the search for divisors of $n$ in a neighborhood of $n^{1/3}$.
\begin{center}
 \includegraphics[scale=0.56]{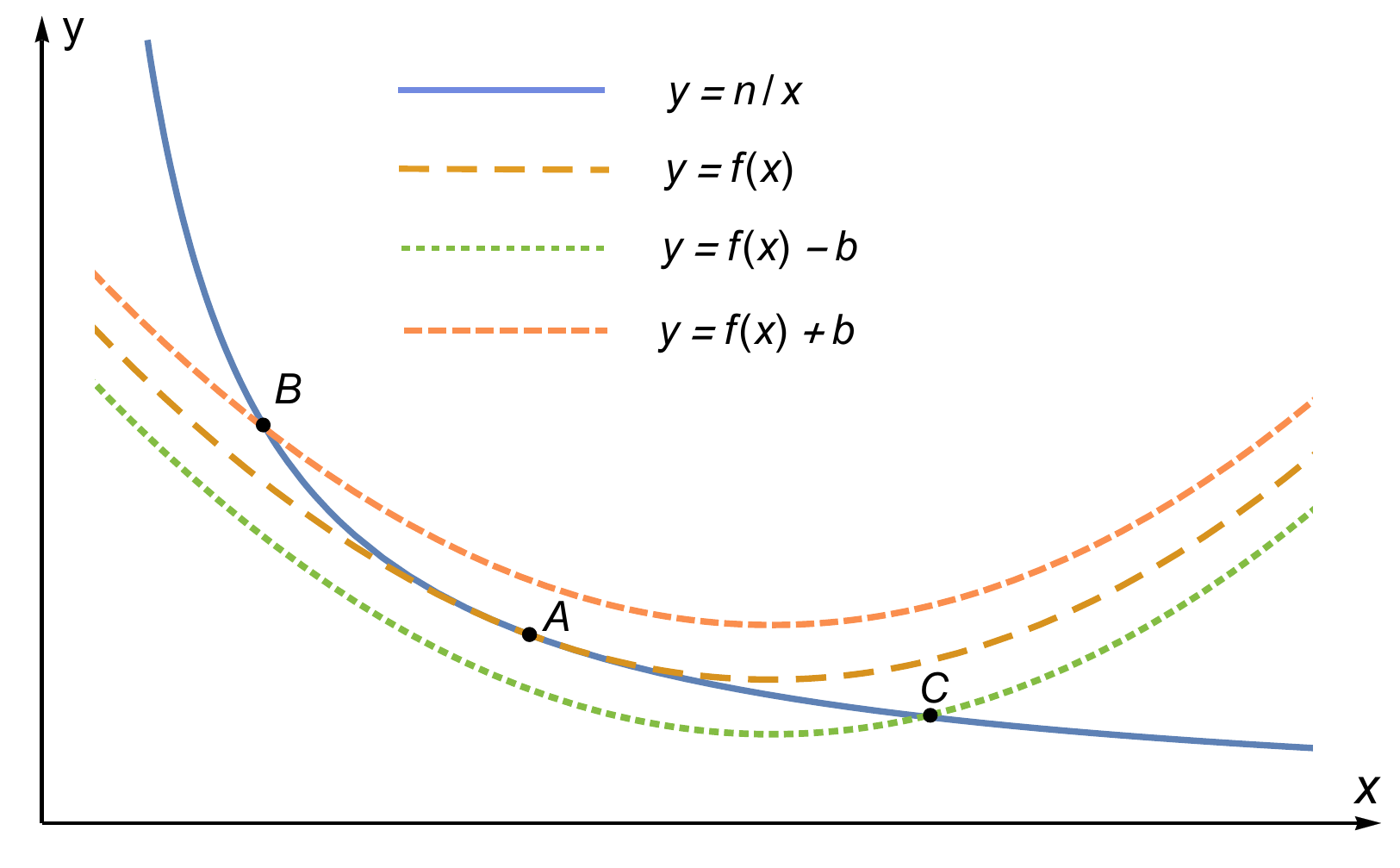}

 { Figure 2:  The second-order approximation of $n/x$ at $x=n^{1/3}$.\\
  $f(x)=x^2 - \big[ 3  n^{1/3}\big] x + \big[ 3 n^{2/3}\big]+\big\lfloor\big[ 3  n^{1/3}\big]n^{1/3}-\big[ 3 n^{2/3}\big]\big\rfloor$.}
 \end{center}
As can be seen from the estimate of the length of the interval, the second-order approximation is inefficient for $s\geq n^{1/4}$, which is equivalent to $x\leq n^{1/4}$.

\noindent {\bf Remark.} The cubic equation $ax^3+bx^2+cx+d=0$ \mbox{with integer coefficients} has one integer root and two complex conjugate roots only if $-\Delta$ is a perfect square, where $\Delta=18abcd-4b^3d+b^2c^2-4ac^3-27a^2d^2$ is  the discriminant of the equation. From Figure $2$ we see that this is realized, when we solve the equation $(\ref{3})$. Thus, in order to answer the question whether $(\ref{3})$ has an integer root, we should check its discriminant  before solving it.

 By analogy with the previous reasoning, we have
\beq\label{4}n/x+s x^3 - \big[ 4 (s^3 n)^{1/4}\big] x^2+ \big[ 6 (s^2 n^2)^{1/4}\big]x-\big[ 4 (s n^3)^{1/4}\big]=C+k,\eeq
where 
\beq\label{4.1}C=\big\lfloor2(s n^3)^{1/4} - \big[ 4 (s^3 n)^{1/4}\big] (n/s)^{1/2}+ \big[ 6 (s^2 n^2)^{1/4}\big](n/s)^{1/4}-\big[ 4 (s n^3)^{1/4}\big]\big\rfloor,\eeq
for divisors in a neighborhood of $(n/s)^{1/4}$.
The estimate of the  length of the interval  corresponding to $b$ steps is $2(b^4 n/s^5)^{1/16}$. Finally, the equation for divisors of $n$ in a neighborhood of $(n/s)^{1/m}$ is:
\beq\label{5}n/x+(-1)^m s x^{m-1} + \sum_{i=1}^{m-1}(-1)^{m-i}\bigg[\binom{m}{i}(s^{m-i}n^i)^{1/m}\bigg]x^{m-1-i}=C+k,\eeq
where
\beq\label{6}C=\big\lfloor\left(1+(-1)^m\right) (s n^{m-1})^{1/m} + \sum_{i=1}^{m-1}(-1)^{m-i}\bigg[\binom{m}{i}(s^{m-i}n^i)^{1/m}\bigg]({n/s})^{(m-1-i)/m} \big\rfloor.\eeq

So far we have considered the case $s\in\mathbb{Z^+}$, but $(\ref{5})$ can be used for rational $s=h/t$ if $k$ is replaced by $k/t$. We put $s=1/t$ $(t\in\mathbb{Z^+})$ in $(\ref{3})$ and replace $k$ by $k/t$, then we obtain the equation for divisors of  $n$ in a neighborhood of $(n t)^{1/3}$. In this case the  length of the interval  corresponding to $b$ steps is $(b^3 t n)^{1/9}$. On the other hand, if we use the first-order approximation of $n/x$ at $x=(n t)^{1/3}$, then the  length of the interval is $2(bt)^{1/2}$. Comparing the obtained estimates with each other, we can answer the question: at what values of $x$ is the first-order approximation more efficient? To answer it, we need to solve the following inequality $2(bt)^{1/2}>(b^3 t n)^{1/9}$. If we put $b=1$, i.e., we make only one check, then we get $t>(n/2^9)^{2/7}$. Finally, we have $x>(n/4)^{3/7}$. The result should be considered approximate.

\noindent {\bf Remark.} If we use $(\ref{5})$ at $x=n^{1/m}$, the interval for divisors is proportional to $n^{1/m^2}$. The following question arises: is there such an approximation of $\mathcal{C}$ at an arbitrary point $x=n^e$ $(0<e<1/2)$ which gives  the length of the interval that is proportional to $x^{e}$?
\subsection{Lehman-like methods}
Let $r$ be a positive integer. The search for divisors of $r n$ in a neighborhood of $\sqrt{r n}$ can be carried out using the equation $r n/x+x-\lfloor2\sqrt{r n}\rfloor=k$ $(k\in \mathbb{Z^+})$. It has the solutions $x=\left(A\pm\sqrt{A^2-4r n}\right)/2$, where $A=\lfloor2\sqrt{r n}\rfloor+k$, which are integer if $A^2-4r n$ is a perfect square. As it was shown, the length of the interval corresponding to $b$ steps $(k=1,2,\ldots,b)$ is equal to $2(b^2 r n)^{1/4}$. We see that if $b=\left\lceil n^{1/6}/(4\sqrt{r})\right\rceil$, then the length is greater than or equal to $n^{1/3}$. 
The relationship between the above and Lehman's method becomes clear if one looks at the version of Lehman's algorithm in \cite{5}.

Let us agree that if we apply the multiplier improvement using an approximation of $r n/x$ at $x=u(r n)$, where $u:\mathbb{R}\to \mathbb{R}$, then we will call it  a divisors trap related  to the function $u$.
In these terms, Lehman's method is based on the trap related to $u(\omega)=\sqrt{\omega}$. We can modify this method by using another trap with the function $v(\omega)=\omega^{1/3}$. So if large $n$ has a divisor near $n^{1/3}$, then it will fall into the latter trap with a smaller value of $r$. In other words, the more traps the smaller the value of the multiplier at which a divisor of composite $n$ will be found. This idea and formulas $(\ref{5}), (\ref{6})$ can be used to obtain efficient factorization algorithms.


\smallskip
\noindent{\bf Acknowledgments.}
I thank my school math teacher Ms. Nadezhda P. Vlasova.

\medskip

\end{document}